\newcommand{\injto}{\hookrightarrow}
\newcommand{\longto}{\longrightarrow}
\newcommand*\rel@kern[1]{\kern#1\dimexpr\macc@kerna}
\newcommand*\widebar[1]{%
  \begingroup
  \def\mathaccent##1##2{%
    \rel@kern{0.8}%
    \overline{\rel@kern{-0.8}\macc@nucleus\rel@kern{0.2}}%
    \rel@kern{-0.2}%
  }%
  \macc@depth\@ne
  \let\math@bgroup\@empty \let\math@egroup\macc@set@skewchar
  \mathsurround\z@ \frozen@everymath{\mathgroup\macc@group\relax}%
  \macc@set@skewchar\relax
  \let\mathaccentV\macc@nested@a
  \macc@nested@a\relax111{#1}%
  \endgroup
}
\def\@citex[#1]#2{\leavevmode
  \let\@citea\@empty
  \@cite{\bfseries\@for\@citeb:=#2\do
    {\@citea\def\@citea{,\penalty\@m\ }%
     \edef\@citeb{\expandafter\@firstofone\@citeb\@empty}%
     \if@filesw\immediate\write\@auxout{\string\citation{\@citeb}}\fi
     \@ifundefined{b@\@citeb}{\hbox{\reset@font\bfseries ?}%
       \G@refundefinedtrue
       \@latex@warning
         {Citation `\@citeb' on page \thepage \space undefined}}%
       {\@cite@ofmt{\csname b@\@citeb\endcsname }}}}{\mdseries #1}}
\DeclareMathOperator\Sym{Sym}
\DeclareMathOperator\End{End}
\DeclareMathOperator\im{im}
\DeclareMathOperator\cone{cone}
\DeclareMathOperator\Fil{Fil}
\DeclareMathOperator\gr{gr}
\DeclareMathOperator\Tot{Tot}
\DeclareMathOperator\Simp{Simp}
\DeclareMathOperator\HoSimp{HoSimp}
\DeclareMathOperator\Ch{Ch}
\DeclareMathOperator\cross{cr}
\def\bQ{{\mathbf{Q}}} \def\bZ{{\mathbf{Z}}}
\def\cO{{\mathcal{O}}}
  \def\cS{{\mathcal{S}}}
\def\cB{{\mathcal{B}}}  \def\cP{{\mathcal{P}}}
\def\cA{{\mathcal{A}}} \def\cK{{\mathcal{K}}}
  \def\rH{{\rm H}} 
\def\rL{{\mathrm L}} \def\rK{{\rm K}}
\def\id{{\rm id}}
\theoremstyle{plain}
\newtheorem{theorem}{Theorem}
\newtheorem{corollary}{Corollary}
\newtheorem{proposition}{Proposition}
\theoremstyle{definition}
\newtheorem{definition}{Definition}
\newtheorem{example}{Example}
\begin{document}

\title
 {Non-additive functors and Euler characteristics}

\author
{Niels {u}it de Bos}

\email{niels.uitdebos@uni-due.de}

\address{Universit\"at Duisburg-Essen,
	Fakult\"at f\"ur Mathematik, 
	Universit\"atsstra{\ss}e 2,
	45117 Essen,
	Germany}

\author
{Lenny Taelman}

\email{l.d.j.taelman@uva.nl}

\address{Korteweg-de Vries Instituut,
         Universiteit van Amsterdam,
         P.O. Box 94248, 
		1090 GE Amsterdam,
		the Netherlands}

\begin{abstract}
We show under suitable finiteness conditions that a functor between abelian categories induces a (not necessarily additive) map between their Grothendieck groups. This is related to the derived functors of Dold and Puppe, and generalizes a theorem of Dold.
\end{abstract}

\keywords{Grothendieck groups, non-additive functors}


\maketitle

\section{Introduction}

Let $\cA$ and $\cB$ be abelian categories. Assume that $\cA$ has enough projectives. If $F\colon \cA \to \cB$ is a functor, not necessarily additive, then we denote by 
\[
	\rL F\colon \cK_{\geq 0}(\cA) \to \cK_{\geq 0}(\cB),
\]
the total derived functor of Dold and Puppe \cite{DoldPuppe61}, where $\cK_{\geq 0}$ is the homotopy category of chain complexes concentrated in non-negative degrees. The definition of $\rL F$ involves taking a projective resolution, using the Dold-Kan theorem to pass to an associated simplicial object, applying $F$ and taking the associated chain complex. The details will be recalled in Section \ref{sec:DoldPuppe}. If $F$ is additive, then $\rL F$ coincides with the usual total left  derived functor. Note that our notation is slightly non-standard, in that we denote by $\rL F$ the functor between homotopy categories, instead of the induced functor between derived categories. 

We will require that $F(0)=0$ and that $F$ is of \emph{degree $\leq d$} for some positive integer $d$. The definition is recursive. $F$ is said to be of degree $\leq 1$ if it is additive, and of degree $\leq d$ if and only if there exists a functor $G\colon \cA\times \cA \to \cB$
of degree $\leq d-1$ in both arguments, together with a functorial decomposition
\[
	F(X \oplus Y) = F(X) \oplus F(Y) \oplus G(X,Y).
\]
Examples of functors of degree $\leq d$ are the Schur functors $\wedge^d$ and $\Sym^d$.

Let $\cA_0$  be a weak Serre subcategory of $\cA$ (this is a slight generalization of the more common notion of a Serre subcategory, see see \cite[Tag 02MN]{stacks-project}). Let 
$\cK_{\geq 0}^{\cA_0}(\cA)$ be the full subcategory of $\cK_{\geq 0}(\cA)$ consisting of those complexes $X_\bullet$ such that
\begin{enumerate}
\item $\rH_i(X_\bullet)=0$ for $i \gg 0$;
\item $\rH_i(X_\bullet) \in \cA_0$ for all $i$.
\end{enumerate} 
Every object $X_\bullet \in \cK_{\geq 0}^{\cA_0}(\cA)$ has an Euler characteristic
\[
	\chi(X_\bullet) = \sum_i (-1)^i \big[ \rH_i(X_\bullet) \big] 
\]
in the Grothendieck group $\rK_0(\cA_0)$. Let $\cB_0$ be a weak Serre subcategory of $\cB$, and define $\cK_{\geq 0}^{\cB_0}(\cB)$ analogously. 

Our main result is the following theorem.

\begin{theorem}\label{thm:one}
Assume that $F\colon \cA \to \cB$ is of finite degree $\leq d$ with $F(0)=0$, and that $\rL F$ maps
$\cK_{\geq 0}^{\cA_0}(\cA)$ to $\cK_{\geq 0}^{\cB_0}(\cB)$. Then
there is a unique map $f\colon \rK_0(\cA_0) \to \rK_0(\cB_0)$ such that the square
\begin{equation}\label{eq:square-one}
\begin{tikzcd}
\cK_{\geq 0}^{\cA_0}(\cA) \arrow{r}{\rL F} \arrow{d}{\chi}
& \cK_{\geq 0}^{\cB_0}(\cB) \arrow{d}{\chi} \\
 \rK_0(\cA_0) \arrow{r}{f} 
& \rK_0(\cB_0)
\end{tikzcd}
\end{equation}
commutes. Moreover, the function $f$ is of degree $\leq d$.
\end{theorem}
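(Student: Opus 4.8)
The plan is to take $f$ to be forced by the square: set $\Phi := \chi\circ\rL F\colon \mathrm{Ob}\,\cK_{\geq 0}^{\cA_0}(\cA)\to\rK_0(\cB_0)$ and show that $\Phi$ factors through $\chi$. Since every class in $\rK_0(\cA_0)$ has the form $[A]-[B]=\chi(X)$ for the complex $X$ with $\rH_0=A$, $\rH_1=B$ (for $A,B\in\cA_0$ placed in degrees $0$ and $1$), the map $\chi$ is surjective; hence as soon as $\Phi$ factors as $\Phi=f\circ\chi$ the function $f$ is unique. Recall that a set map $\phi\colon M\to N$ of abelian groups with $\phi(0)=0$ is of \emph{degree $\leq d$} when there is $\psi\colon M\times M\to N$, of degree $\leq d-1$ in each variable, with $\phi(x+y)=\phi(x)+\phi(y)+\psi(x,y)$, degree $\leq 1$ meaning additive; this is the statement's notion, parallel to the one for functors. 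Throughout I use that for $F$ of finite degree $\rL F$ sends quasi-isomorphisms to quasi-isomorphisms (recalled in Section \ref{sec:DoldPuppe}), so that $\Phi$ is already an invariant of the homotopy class of $X$, and that $\chi$ is additive on short exact sequences of complexes.

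The engine of the proof is a d\'evissage lemma. Writing $G$ for the degree-$\leq d-1$ cross-effect bifunctor with $F(X\oplus Y)=F(X)\oplus F(Y)\oplus G(X,Y)$ and $\Phi_G:=\chi\circ\rL G$, I claim that for every degreewise split short exact sequence of complexes $0\to S\to T\to Q\to 0$ in $\cK_{\geq 0}^{\cA_0}(\cA)$ one has
\[
\Phi(T)=\Phi(S)+\Phi(Q)+\Phi_G(S,Q).
\]
To prove this I choose, by the horseshoe lemma, a degreewise split short exact sequence of projective resolutions $0\to P^S\to P^T\to P^Q\to 0$; applying Dold--Kan gives a degreewise split short exact sequence of simplicial objects with $\Gamma P^S$ an honest sub-simplicial-object of $\Gamma P^T$ and quotient $\Gamma P^Q$. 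Because $\Gamma P^S$ is a subobject, all simplicial structure maps of $\Gamma P^T$ are \emph{upper triangular} for the degreewise splitting; applying $F$ degreewise and using the naturality of the cross-effect decomposition, the resulting filtration of $F(\Gamma P^T)$ by ``number of $Q$-factors'' is a filtration by sub-simplicial-objects whose associated graded is the split decomposition, with pieces assembling to $F(\Gamma P^S)$, $G(\Gamma P^S,\Gamma P^Q)$ and $F(\Gamma P^Q)$. Normalising and using that $\chi$ is additive along a finite filtration of complexes then yields the displayed identity. (Since $\cB_0$ is a weak Serre subcategory, the homology of the graded pieces again lies in $\cB_0$, so every $\chi$ here is defined in $\rK_0(\cB_0)$; in particular $\rL G$ also satisfies the mapping hypothesis.) \textbf{This lemma is the main obstacle:} everything else is bookkeeping, whereas here one must control a non-additive functor on a non-split extension, which is exactly where the upper-triangular cross-effect filtration is needed.

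I then argue by induction on $d$, in the evident generality of multifunctors so that the cross-effect $G$ is covered. For $d=1$ the functor $F$ is additive, $\rL F$ is triangulated, and $f$ is the additive map it induces on Grothendieck groups. For the inductive step, $G$ is symmetric (from $X\oplus Y\cong Y\oplus X$) and of degree $\leq d-1$ in each variable, so the induction hypothesis provides a symmetric function $g$, of degree $\leq d-1$ in each variable, with $\Phi_G=g\circ(\chi\times\chi)$. Well-definedness of $f$ now follows by reducing an arbitrary $X$ to a normal form while keeping $\chi$ fixed: iterating the d\'evissage lemma over the Postnikov filtration replaces $X$ by $\bigoplus_i\rH_i(X)[i]$ without changing $\Phi$, since the correction terms depend only on the classes $\chi(\rH_i(X)[i])$ and these agree for the twisted and the split assemblies. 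Applying the contractible complex $(A\xrightarrow{\;\mathrm{id}\;}A)$ to the d\'evissage lemma and comparing with the direct-sum formula shows $\Phi(A[i+1])+\Phi(A[i])=-g\big((-1)^i[A],(-1)^{i+1}[A]\big)$, whence shift-by-two invariance $\Phi(A[i+2])=\Phi(A[i])$ using the symmetry of $g$; together with the same comparison applied to the individual summands, this collapses $\bigoplus_i\rH_i(X)[i]$ to a two-term split complex and exhibits $\Phi(X)$ as a quantity depending only on $\chi(X)$, the $g$-terms compensating exactly the ambiguity in the chosen representative. Thus $\Phi=f\circ\chi$ for a well-defined $f$.

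Finally, the direct-sum case of the d\'evissage lemma reads $f(\alpha+\beta)=f(\alpha)+f(\beta)+g(\alpha,\beta)$ with $g$ of degree $\leq d-1$ in each variable and $f(0)=\Phi(0)=0$, which is precisely the recursion exhibiting $f$ as a function of degree $\leq d$; this closes the induction. Uniqueness was already noted from the surjectivity of $\chi$, completing the proof.
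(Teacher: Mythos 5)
Your overall architecture (reduce to a d\'evissage identity $\Phi(T)=\Phi(S)+\Phi(Q)+\Phi_G(S,Q)$ for degreewise split extensions, then induct on $d$) is reasonable, and you correctly identify the d\'evissage lemma as the crux. But your proof of that lemma has a genuine gap, precisely at the point you flag as the main obstacle. Writing the structure maps of $\Gamma P^T$ as upper-triangular matrices $\left(\begin{smallmatrix} a & b\\ 0 & c\end{smallmatrix}\right)$, the three-step filtration $F(S_n)\subset F(S_n)\oplus F_2(S_n,Q_n)\subset F(T_n)$ is indeed preserved by $F(\theta)$ (it is $\im F(e_1)\subset \ker F(p_2)\subset F(T_n)$), and the outer graded pieces carry the maps $F(a)$ and $F(c)$. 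The problem is the middle piece: the induced map on $\gr_1\cong F_2(S_n,Q_n)$ is $\cross_2\circ F(\theta)\circ\cross_2$, which equals $F_2(a,c)$ only when $d\leq 2$. Already for $F=\Sym^3$ one computes that $s\otimes q_1q_2\in S\otimes\Sym^2Q$ is sent, modulo $\Sym^3 Y_1$, to $(as)(cq_1)(cq_2)+\big[(as)(bq_1)(cq_2)+(as)(cq_1)(bq_2)\big]$; the bracketed term lands in $\Sym^2Y_1\otimes Y_2\subset F_2(Y_1,Y_2)$ and genuinely involves $b$. So $\gr_1$ has the right terms but the wrong simplicial structure maps, and it is not isomorphic to $G(\Gamma P^S,\Gamma P^Q)$. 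This matters: in the present setting the complexes are unbounded and only their homology lies in $\cB_0$, so $\chi$ is computed from homology and does depend on the differentials; you cannot conclude $\chi(\gr_1)=\Phi_G(S,Q)$. One would want to refine the filtration by ``$Q$-degree,'' but for a general functor $F$ (unlike a Schur functor) the bifunctor value $F_2(S,Q)$ admits no canonical finer decomposition, so ``number of $Q$-factors'' is not defined beyond the three-step filtration.

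This is exactly the difficulty the paper's argument is built to avoid. Instead of filtering $F$ of the middle term of the extension, the paper resolves $F(P_\bullet)$ (the quotient) by K\"ock's double complex, whose entries $F_n(X_\bullet,\dots,X_\bullet)$ and $F_n(X_\bullet,\dots,X_\bullet,Y_\bullet)$ are functorial in the map $X_\bullet\to Y_\bullet$ alone and require no choice of splitting; the splitting enters only in the proof that the total complex is acyclic, where the problematic identification of graded pieces never has to be made. Combined with a presentation of $\rK_0(\cA_0)$ by complexes of projectives modulo homotopy equivalence and the relation identifying two quotients of the same subcomplex, this immediately gives that $\chi(F(P_\bullet))$ depends only on $X_\bullet\to Y_\bullet$. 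Your subsequent normal-form reduction (Postnikov d\'evissage, shift-by-two invariance) is plausible modulo the lemma, but as it stands the lemma itself is unproven, and the statement you assert about the associated graded is false for $d\geq 3$.
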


Here $f$ being of degree $\leq d$ is also defined recursively: $f$ is of degree $\leq 1$ if it is linear, and of degree $\leq d$  if the function
\[
	g(x,y) := f(x+y) - f(x) - f(y) 
\]
is of degree $\leq d-1$ in both arguments.

Typically, $\rK_0(\cA_0)$ is only interesting if $\cA_0$ is small enough, whereas
$\cA$ needs to be large enough to contain projective resolutions (or with the dual version of the theorem, injective resolutions) of objects in $\cA_0$. For example $\cA_0$ could be the category of finite abelian groups, or of coherent $\cO_X$-modules on a scheme $X$, with $\cA$ the category of finitely generated abelian groups respectively quasi-coherent $\cO_X$-modules. In both cases the natural map $\rK_0(\cA_0) \to \rK_0(\cA)$ is the zero map.  

If $\cA_0=\cA$ and if every object of $\cA$ has a finite projective resolution, then Theorem \ref{thm:one} is a theorem of Dold \cite{Dold72}. Our proof is similar in flavour to Dold's, but because we cannot just compute Euler characteristics in $\rK_0(\cA)$, 
we need more refined  constructions that  only involve chain complexes (or simplicial objects) in $\cA$ with homology in $\cA_0$. A crucial ingredient in our proof is a double complex due to K\"ock \cite{Kock01}, which induces a functorial resolution of $F(P/Q)$ for a quotient $P/Q$ in terms of cross effects of $F$ applied to $P$ and $Q$, see \S \ref{sec:Koeck-resolution}.

\bigskip

We end this introduction with three examples where we compute the map $f$ of the theorem explicitly.

\begin{example}
If $F\colon \cA \to \cB$ is \emph{additive} then the theorem follows immediately from the long exact sequence of homology. The map $f$ is additive, and characterized by
\[
	f([X]) = \sum_i (-1)^i [ \rL_iF X ]
\]
in $\rK_0(\cB_0)$, for all $X \in \cA_0$. (Note that the sum is finite because of our assumption on $\rL F$).
\end{example}

\begin{example}
Let $R$ be a commutative ring and let
$\cA=\cA_0$ be the category of finitely generated $R$-modules. Assume every $M \in \cA$ has a finite projective resolution. Consider the symmetric  power 
$F=\Sym^d \colon \cA \to \cA$. Every element of $\rK_0(\cA)$ is of the form
$[P]-[Q]$ for projective objects $P$, $Q$ in $\cA$. Consider a complex $Q\overset{0}{\to} P$ in degree $1$ and $0$. 
Using a theorem of Quillen \cite[I.4.3.2]{Illusie71} one can show that
\[
	\rL_i\Sym^d(Q\to P) = \Sym^{d-i} P \otimes \wedge^i Q
\]
for every $i$ (See also \cite[2.4]{Kock01}). It follows that the map $f$ of Theorem \ref{thm:one} is 
\[
	f\colon \rK_0(\cA) \to \rK_0(\cA),\,\,
	[P]-[Q] \mapsto \sum_{i=0}^d (-1)^i \big[\Sym^{d-i} P\otimes\wedge^{i} Q  \big],
\]
and in particular that this map is well-defined. Similarly, one finds for $G=\wedge^d$ the map
\[
	g\colon \rK_0(\cA) \to \rK_0(\cA),\,\,
	[P]-[Q] \mapsto \sum_{i=0}^d (-1)^i \big[\wedge^{d-i} P\otimes\Gamma^{i} Q  \big]
\]
where $\Gamma^i Q$ denotes the $i$-th divided power module of $Q$.
\end{example}

In the third example we consider a situation where it is necessary to separate the roles of $\cA$ and $\cA_0$.

\begin{example}
Let $\cA_0=\cB_0$ be the category of finite (torsion) $\bZ$-modules, and $\cA=\cB$ the category of all $\bZ$-modules.  Consider the functors $F=\Sym^d$ and $G=\wedge^d$ from $\cA$ to $\cB$. If $X_\bullet$ is in $\cK_{\geq 0}^{\cA_0}(\cA)$ then the groups $\rL_i F(X_\bullet)$ and $\rL_i G(X_\bullet)$ are finitely generated and vanish for $i\gg 0$. Moreover, they vanish after tensoring with $\bQ$, hence $\rL F(X_\bullet)$ and $\rL G(X_\bullet)$ lie
in $\cK_{\geq 0}^{\cB_0}(\cB)$.

The cardinality of a module defines an isomorphism $\rK_0(\cA_0)\cong\bQ^\times_{>0}$. We claim that the induced maps $f,g\colon \bQ^\times_{>0} \to \bQ^\times_{>0}$  are given by
\[
	f(x)=x \quad \text{ and } g(x)=x^{(-1)^{d-1}},
\]
for all $x\in \bQ^\times$. 

Indeed: since  $f$ and $g$ are of degree $\leq d$, it suffices to show the above identities for all positive \emph{integers} $x$. Let $m$ be a positive integer and consider the complex $X$ consisting of a {cyclic} group $\bZ/m\bZ$ placed in degree $0$. Because $\wedge^i \bZ=0$ for $i\neq 0,1$ one can easily compute  $\rL_i\Sym^d X$ and $\rL_i\!\wedge^d X$ using the Koszul complexes associated to the resolution $\bZ\overset{m}{\to}\bZ$ of $X$ (see~\cite[2.4, 2.7]{Kock01}). One finds 
\[
	\rL_i \Sym^d (\bZ/m\bZ) \cong
	\begin{cases}
		\bZ/m\bZ & (i=0) \\
		0 & (i\neq 0)
	\end{cases}
\]
and
\[
	\rL_i\!\wedge^d (\bZ/m\bZ) \cong
	\begin{cases}
		0 & (i\neq d-1) \\
		\bZ/m\bZ & (i=d-1).
	\end{cases}
\]
and hence $f(m)=m$ and $g(m)=m^{(-1)^{d-1}}$ as claimed. (For an alternative computation of  $\rL_i\Sym^d(\bZ/m\bZ)$ and  $\rL_i\!\wedge^d (\bZ/m\bZ)$ see Jean \cite[\S 2.3]{Jean02}).
\end{example}

\subsection*{Acknowledgements}
The authors are grateful to Dan Dugger for pointing them to the paper of Dold \cite{Dold72}, and to Bernhard K\"ock for his many valuable comments on  earlier versions of this paper. 

This paper is an outgrowth of the master's thesis of the first author, written under supervision of the second. The second author is supported by a grant of the Netherlands Organization for Scientific Research (NWO).

\section{Cross-effects and functors of finite degree} \label{sec:cross-effects}

In this section we briefly summarize the definition and main properties of cross effect functors. We refer to the the original text of Eilenberg and Mac Lane \cite{EilenbergMacLane54} for proofs and more details.

Let $\cA$ be an abelian category. Let $X_1,\ldots, X_n \in \cA$. Put $X:=X_1\oplus\cdots \oplus X_n$.
Let $e_i\in \End X$ be the idempotent with image $X_i$. For a subset $S\subset I=\{1,\ldots, n\}$ we define
\[
	e_S := \sum_{i\in S} e_i \in \End X.
\]
This is the idempotent with image $\oplus_{i\in S} X_i \subset X$. 

Now let $\cB$ be an abelian category and let $F$ be a functor $\cA \to \cB$ with $F(0)=0$. Consider the endomorphism
\[
	\cross_n(e_1,\ldots, e_n) := \sum_{S \subset I} (-1)^{n-|S|} F(e_S) \in \End F(X).
\]
Then one verifies that $\cross_n(e_1,\ldots,e_n)$ is \emph{idempotent}.

\begin{definition}The $n$-th \emph{cross effect} of $F$ is the functor $F_n\colon \cA^n\to \cB$ given by
\[
	F_n(X_1,\ldots, X_n) :=  \im \cross_n(e_1,\ldots, e_n) \subset  F(X_1\oplus \cdots \oplus X_n).
\]
\end{definition}

Note that $F_0=0$ and $F_1=F$. We now list some more basic properties that are useful in working with the cross effect functors.

\begin{proposition} \label{prop:cross-decomposition}
There are isomorphisms
\[
	F(X_1 \oplus X_2) = F(X_1) \oplus F(X_2) \oplus F_2(X_1,X_2),
\]
functorial in $X_1$ and $X_2$. The functor $F$ is additive if and only if $F_2$ vanishes.
 \qed
\end{proposition}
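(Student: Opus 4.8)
The plan is to reduce everything to a statement about orthogonal idempotents. First I would specialize the definition of the cross effect to $n=2$ and compute it explicitly. Writing $e_1,e_2\in\End X$ for the idempotents attached to the summands of $X=X_1\oplus X_2$, and using $F(0)=0$ to kill the empty-subset term (the zero morphism $X\to X$ factors through the zero object, so $F$ sends it to $0$), I obtain
\[
	\cross_2(e_1,e_2) = \id_{F(X)} - F(e_1) - F(e_2).
\]

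The heart of the argument is to show that $F(e_1)$ and $F(e_2)$ are \emph{orthogonal} idempotents in $\End F(X)$. Idempotency is immediate from functoriality, $F(e_i)^2 = F(e_i^2) = F(e_i)$. Orthogonality is exactly where the hypothesis $F(0)=0$ does the work: since $e_1,e_2$ are orthogonal in $\End X$ we have $e_1 e_2 = e_2 e_1 = 0$, and applying $F$ gives $F(e_1)F(e_2) = F(e_1 e_2) = F(0) = 0$, and symmetrically. Thus, although $F$ is not additive, orthogonality survives because it is phrased purely through composition and the zero morphism. A short computation then shows that $p_1:=F(e_1)$, $p_2:=F(e_2)$, $p_3:=\cross_2(e_1,e_2)$ are pairwise orthogonal idempotents with $p_1+p_2+p_3=\id_{F(X)}$.

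Because $\cA$ and $\cB$ are abelian, hence idempotent-complete, such a complete orthogonal system splits $F(X)$ as the direct sum of the images of the $p_i$, giving $F(X)=\im p_1\oplus \im p_2\oplus \im p_3$. It remains to identify the summands. By definition $\im p_3 = F_2(X_1,X_2)$. For the others I would factor $e_1=\iota_1\pi_1$ through $X_1$; then $F(\pi_1)F(\iota_1)=F(\id_{X_1})=\id_{F(X_1)}$ exhibits $F(\iota_1)$ as a split monomorphism and $F(\pi_1)$ as a split epimorphism, so $\im F(e_1)=\im F(\iota_1)\cong F(X_1)$, and symmetrically $\im p_2\cong F(X_2)$. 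This yields the stated decomposition. Functoriality is then the observation that the whole construction is natural: for $f=f_1\oplus f_2$ one has $F(f)F(e_i)=F(e_i')F(f)$, so $F(f)$ respects the idempotent decompositions.

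Finally, for the additivity criterion, note that $F_2$ vanishes iff $p_3=\cross_2(e_1,e_2)=0$ for all $X_1,X_2$, i.e.\ iff $\id_{F(X)}=F(e_1)+F(e_2)$; equivalently $F$ preserves binary biproducts, the decomposition collapsing to $F(X_1\oplus X_2)=F(X_1)\oplus F(X_2)$ via the canonical maps. One implication is then immediate: if $F$ is additive, $F(e_1+e_2)=F(e_1)+F(e_2)$ gives $\cross_2=\id-F(e_1)-F(e_2)=0$. For the converse I would invoke the standard fact that a functor between additive categories sending $0\mapsto 0$ and preserving binary biproducts is automatically additive, using $f+g=\nabla\circ(f\oplus g)\circ\Delta$ and applying $F$. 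I expect the only genuinely delicate points to be the orthogonality $F(e_1)F(e_2)=0$, which is precisely where the non-additivity of $F$ must be handled, and the verification that the resulting decomposition is natural rather than merely objectwise.
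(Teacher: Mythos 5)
Your proof is correct, and since the paper states this proposition without proof (deferring to Eilenberg--Mac Lane), your argument via the complete orthogonal system of idempotents $F(e_1)$, $F(e_2)$, $\mathrm{cr}_2(e_1,e_2)=\id-F(e_1)-F(e_2)$ is exactly the standard one being invoked. All the delicate points you flag --- orthogonality $F(e_1)F(e_2)=F(0)=0$ via factoring the zero morphism through the zero object, identification of $\im F(e_i)$ with $F(X_i)$ via the split $e_i=\iota_i\pi_i$, and recovering additivity from preservation of biproducts --- are handled correctly.
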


The higher cross-effect functors satisfy a kind of associativity property that allows them to be computed in a recursive way:

\begin{proposition} \label{prop:cross-associative}
Let $X_1,\ldots,X_n \in \cA$. Let $G\colon \cA \to \cB$ be the functor given by
\[
	G(Y) := F_{n+1}(X_1,\ldots,X_n,Y)
\]
then there are  isomorphisms
\[
	G_m(Y_1,\ldots, Y_m) = F_{m+n}(X_1,\ldots,X_n,Y_1,\ldots, Y_m),
\]
functorial in the $X_i$ and $Y_i$. \qed
\end{proposition}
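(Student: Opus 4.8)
The plan is to prove Proposition~\ref{prop:cross-associative} by reducing everything to the idempotent description of the cross effects and checking that the relevant idempotents on $F(X_1 \oplus \cdots \oplus X_n \oplus Y_1 \oplus \cdots \oplus Y_m)$ agree. First I would recall from the definition that for a functor $H$ with $H(0)=0$, the $k$-th cross effect $H_k(Z_1,\ldots,Z_k)$ is the image of the idempotent $\cross_k(f_1,\ldots,f_k) = \sum_{T} (-1)^{k-|T|} H(f_T)$, where the $f_i$ are the idempotents cutting out the summands $Z_i$ inside $Z_1 \oplus \cdots \oplus Z_k$. So both sides of the claimed isomorphism are subobjects (images of idempotents) of a common object, and the natural strategy is to show the two idempotents literally coincide as endomorphisms.

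The key computational step is to unwind $G_m(Y_1,\ldots,Y_m)$. By definition $G(Y) = F_{n+1}(X_1,\ldots,X_n,Y)$, which is the image of the idempotent $p(Y) := \cross_{n+1}(e_1,\ldots,e_n,e_Y)$ acting on $F(X_1\oplus\cdots\oplus X_n\oplus Y)$, where $e_Y$ projects onto the last summand. To form $G_m(Y_1,\ldots,Y_m)$ one takes $Y = Y_1\oplus\cdots\oplus Y_m$ and applies $\cross_m$ in the $G$-variables. Here I would use that $G$ is a subfunctor of $Y \mapsto F(X_1\oplus\cdots\oplus X_n\oplus Y)$ cut out by a functorial idempotent, so the cross-effect idempotent for $G$ is computed by restricting the cross-effect idempotent for this larger functor and composing with $p$. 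Writing $E := X_1 \oplus \cdots \oplus X_n$ and expanding, one gets on $F(E \oplus Y_1 \oplus \cdots \oplus Y_m)$ an expression
\[
  \Big(\sum_{S \subseteq \{1,\ldots,m\}} (-1)^{m-|S|}\, F(e_E + e_S)\Big),
\]
which, after inserting the projector $p$ and expanding $p$ as an alternating sum over subsets of $\{1,\ldots,n\}$ together with the last coordinate, reorganizes into the single alternating sum $\cross_{n+m}(e_1,\ldots,e_n,e_{Y_1},\ldots,e_{Y_m})$ defining $F_{m+n}(X_1,\ldots,X_n,Y_1,\ldots,Y_m)$. The combinatorial heart is a double-sum reindexing: a subset of $\{1,\ldots,n+m\}$ is split into its intersection with the $X$-block and its intersection with the $Y$-block, and the product of the two families of signs and face maps recombines correctly. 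Along the way I would use functoriality of $F$ applied to sums of orthogonal idempotents, i.e.\ that $F(e_T)$ for $e_T = \sum_{i \in T} e_i$ behaves compatibly under the block decomposition.

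The main obstacle I anticipate is bookkeeping rather than conceptual: one must be careful that the idempotent defining $G_m$ is computed \emph{inside} $G$, so the outer $\cross_m$ only sees the $Y$-coordinates while the $X$-coordinates are held fixed by the inner projector $p$, and one has to verify that these two idempotents commute and that their composite is exactly the target idempotent. A clean way to avoid a messy direct expansion is to argue by induction on $m$ using Proposition~\ref{prop:cross-decomposition}: the case $m=1$ is essentially the definition of $G_1 = G$, and the inductive step uses the binary decomposition $G(Y' \oplus Y_m) = G(Y') \oplus G(Y_m) \oplus G_2(Y',Y_m)$ together with the corresponding decomposition of $F_{n+1}$ in its last argument to peel off one variable at a time. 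I expect the functoriality claims to follow formally once the idempotents are matched, so the real work is confirming the sign and indexing identity in the reindexing step.
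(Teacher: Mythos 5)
Your proposal is essentially correct; note that the paper itself gives no proof of this proposition (it is stated with a \qed and the reader is referred to Eilenberg--Mac Lane for details), so there is no in-paper argument to compare against, but your direct idempotent computation is the standard one and it does go through. Writing $E=X_1\oplus\cdots\oplus X_n$, $p=\cross_{n+1}(e_1,\ldots,e_n,e_Y)$ and $q=\sum_{S}(-1)^{m-|S|}F(e_E+f_S)$, the two points you flag as needing care both resolve cleanly: (i) $p$ and each $F(e_E+f_S)$ commute because the underlying idempotents of $E\oplus Y$ commute and $F$ preserves composition, so $G(f_S)$ really is the restriction of $F(e_E+f_S)$ to $\im p$ and $G_m=\im(pq)$; (ii) in the expansion of $pq=\sum_{T,S}(-1)^{n+1-|T|+m-|S|}F(e_T(e_E+f_S))$, the terms with $n+1\notin T$ are independent of $S$ and cancel because $\sum_S(-1)^{m-|S|}=(1-1)^m=0$, while the terms with $n+1\in T$ reindex via $U=(T\cap\{1,\ldots,n\})\cup S$ into exactly $\cross_{n+m}(e_1,\ldots,e_n,f_1,\ldots,f_m)$. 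That cancellation of the $n+1\notin T$ terms is the one step your sketch leaves implicit and should be spelled out; once it is, the sign bookkeeping is exactly the double-sum reindexing you describe (I checked it in the case $n=1$, $m=2$ and in general). Your fallback induction on $m$ via Proposition~\ref{prop:cross-decomposition} would also work, but the direct computation is shorter. Functoriality is, as you say, automatic once the idempotents are identified.
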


Propositions \ref{prop:cross-decomposition} and \ref{prop:cross-associative} give a decomposition
\[
	F(X_1\oplus \cdots \oplus X_n) = \bigoplus_{0<d\leq n} \bigoplus_{i_1<\cdots<i_d} 
	F_d(X_{i_1}, \ldots, X_{i_d} ).
\]

\begin{definition}Let $F\colon \cA \to \cB$ be a functor with $F(0)=0$. Let $d$ be a positive integer. We say that $F$ is \emph{of degree $\leq d$} if the functor $F_{d+1}$ vanishes.
\end{definition}

Using Propositions \ref{prop:cross-decomposition} and \ref{prop:cross-associative} one sees that this definition coincides with the recursive definition given in the introduction.

\section{K\"ock's resolution}\label{sec:Koeck-resolution}

Let $F\colon \cA \to \cB$ be a functor with $F(0)=0$. Let
\[
	0\to X\to Y\to Z \to 0
\]
be a split short exact sequence in $\cA$. Following K\"ock \cite{Kock01}, we will describe an explicit resolution of $F(Z)$ in terms of the map $X\to Y$ and the cross effect functors of $F$.  We do not choose a preferred splitting, and insist that all constructions be functorial in the short exact sequence $0\to X\to Y\to Z \to 0$.

Let $n> 0$. For $1\leq i \leq n$ consider the maps $\delta_{n,i} \colon X^{\oplus (n+1)} \to X^{\oplus n}$ given by
\[
	(x_1,\ldots, x_{n+1}) \mapsto 
	(x_1,\ldots, x_{i-1}, x_{i} + x_{i+1}, x_{i+2}, \ldots, x_{n+1} )
\]
These induce maps
\[
	F(\delta_{n,i}) \colon F(X^{\oplus (n+1)}) \to F(X^{\oplus n})
\]
which restrict to maps
\[
	F(\delta_{n,i}) \colon F_{n+1}(X,\ldots, X, X) \to F_n(X,\ldots, X).
\]
Let $d_n$ be the map
\[
	d_n = \sum_i (-1)^i F(\delta_{n,i}) \colon F_{n+1}(X,\ldots, X, X) \to F_n(X,\ldots, X).
\]
One verifies directly that 
\[
	\cdots \longto F_3(X,X,X) \overset{d_2}{\longto} F_2(X,X) \overset{d_1}{\longto} F(X)
\]
forms a complex in $\cB$. Using the map $X\to Y$, one similarly constructs a complex
\[
	\cdots \longto F_3(X,X,Y) \longto F_2(X,Y) \longto F(Y).
\]
Let $C$ be the double chain complex
\[
\begin{tikzcd}
\cdots \arrow{r} 
	& F_3(X,X,Y) \arrow{r} 
	& F_2(X,Y) \arrow{r} 
	& F(Y)   \\
\cdots \arrow{r} 
	& F_3(X,X,X) \arrow{r} \arrow{u}
	& F_2(X,X) \arrow{r} \arrow{u}
	& F(X)  \arrow{u} 
\end{tikzcd}
\]
with $F(Y)$ in degree $(0,0)$. The map $Y\to Z$ induces a map $F(Y) \to F(Z)$ and one obtains an augmented double complex
\begin{equation}\label{eq:double-resolved}
\begin{tikzcd}
\cdots \arrow{r} 
	& F_3(X,X,Y) \arrow{r} 
	& F_2(X,Y) \arrow{r} 
	& F(Y)   \arrow{r}
	& F(Z) \\
\cdots \arrow{r} 
	& F_3(X,X,X) \arrow{r} \arrow{u}
	& F_2(X,X) \arrow{r} \arrow{u}
	& F(X)   \arrow{r} \arrow{u}
	& 0 \arrow{u} 
\end{tikzcd}
\end{equation}
functorial in the short exact sequence $0 \to X \to Y \to Z \to 0$.

\begin{theorem}\label{thm:resolution}
The map $\Tot C \to F(Z)$ induced by (\ref{eq:double-resolved})
is a quasi-isomorphism.
\end{theorem}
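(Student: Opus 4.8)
The plan is to use that the sequence is split. The double complex (\ref{eq:double-resolved}) and the augmentation $\Tot C \to F(Z)$ are built functorially, with no reference to a splitting; but being a quasi-isomorphism is a property, so we are free to verify it after choosing a splitting $Y \cong X \oplus Z$ under which $X \to Y$ becomes the inclusion of the first summand and $Y \to Z$ the projection onto the second. Concretely, it suffices to prove that the total complex of the full augmented double complex (\ref{eq:double-resolved}) is acyclic, and I would do this by first computing homology in the vertical direction.

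For $n \geq 0$ the $n$-th column of (\ref{eq:double-resolved}) is the two-term complex $F_{n+1}(X,\ldots,X,X) \to F_{n+1}(X,\ldots,X,Y)$, induced by $X \injto Y$ in the last variable. Writing $H(-) := F_{n+1}(X,\ldots,X,-)$ and applying Proposition \ref{prop:cross-decomposition} to $H(X \oplus Z)$, with Proposition \ref{prop:cross-associative} used to identify the cross term $H_2(X,Z)$, this map is the inclusion of a direct summand whose cokernel is
\[
    A_n \oplus A_{n+1}, \qquad A_n := F_{n+1}(X,\ldots,X,Z) \ \ (\text{$n$ copies of $X$}).
\]
The augmentation column (at $p=-1$) is $0 \to F(Z) = A_0$. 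Hence the vertical homology is concentrated in the top row, and the first page of the resulting spectral sequence is the augmented complex
\[
    \cdots \longto A_1 \oplus A_2 \longto A_0 \oplus A_1 \longto A_0 .
\]
It remains to show that this complex is acyclic.

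The point is that it carries a staircase of identity maps. The second summand $A_n$ of the term in homological degree $n-1$ and the first summand $A_n$ of the term in degree $n$ are the \emph{same} object $F_{n+1}(X,\ldots,X,Z)$, and the component of the differential joining them is $\pm\id$: it is produced by the last face map $\delta_{n,n}$, which folds the final copy of $X$ into $Y$ along the splitting $X \injto Y$, and tracking the $Z$-part through the decompositions of Propositions \ref{prop:cross-decomposition} and \ref{prop:cross-associative} shows it carries $A_n$ isomorphically onto the cross term $A_n$ of the next column. Similarly the augmentation restricts to the identity on the summand $A_0$. Granting this, each object $A_n$ occurs exactly twice, in adjacent degrees, matched by an isomorphism; Gaussian elimination of these components collapses the whole complex, so it is acyclic. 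Equivalently, it is the mapping cone of the identity of the complex with terms $A_n$, up to a shift.

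The main obstacle is exactly this last computation: exhibiting the identity staircase in the differential of the cokernel complex. Because $F$ is not additive, the maps $F(\delta_{n,i})$ do not respect the direct-sum decompositions strictly, so one must argue throughout via the functorial cross-effect isomorphisms of Propositions \ref{prop:cross-decomposition} and \ref{prop:cross-associative}, matching indices and signs, to confirm that the distinguished component equals $\pm\id$ and that the remaining components do not obstruct the elimination. This combinatorial bookkeeping, rather than any homological difficulty, is where the real work lies.
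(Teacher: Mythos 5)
Your argument is correct and is essentially the paper's own proof: both choose a splitting $Y\cong X\oplus Z$, use the decomposition $F_n(X,\ldots,X,Y)\cong F_n(X,\ldots,X)\oplus F_n(X,\ldots,Z)\oplus F_{n+1}(X,\ldots,X,Z)$ supplied by Propositions \ref{prop:cross-decomposition} and \ref{prop:cross-associative}, and cancel the resulting staircase of identity components coming from the last face map. The paper merely packages your two steps (vertical homology, then Gaussian elimination) into a single increasing filtration of the double complex whose graded pieces are visibly exact squares of inclusions and projections --- a tidy way to make your ``the remaining components do not obstruct the elimination'' step rigorous --- but the underlying computation is the same.
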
 

This follows from \cite[Lemma 2.2]{Kock01}, where more generally it is shown that for 
every map $X\to Y$ the complex $\Tot C$ computes the total derived functor of $F$ applied to $X\to Y$ (at least if $X$ and $Y$ are projective). Since Theorem \ref{thm:resolution} concerns a simple case which can be proven and stated without reference to derived non-additive functors, we give a direct proof.

\begin{proof}[Proof of Theorem \ref{thm:resolution}]
We need to show that the total complex associated to the double complex (\ref{eq:double-resolved})
is exact. Choose a splitting $Z\to Y$ of the short exact sequence $0\to X\to Y \to Z\to 0$. In particular, we obtain for every $n$ an inclusion
\[
	F_n(X,\ldots,X,X) \oplus F_n(X,\ldots,X,Z) \injto F_n(X,\ldots,X,Y),
\]
and by Proposition \ref{prop:cross-decomposition} and \ref{prop:cross-associative}, the cokernel is 
 $F_{n+1}(X,\ldots,X,X,Z)$.
We use these inclusions to produce an increasing filtration $\Fil_\bullet$ on the double complex (\ref{eq:double-resolved}) by letting $\Fil_n$ be the sub-double complex
\[
\begin{tikzcd}
\cdots \arrow{r} 
	& 0 \arrow{r} 
	& F_n(X,\ldots,X) \oplus F_n(X,\ldots,Z) \arrow{r} 
	& F_{n-1}(X,\ldots,Y) \arrow{r} 
	& \cdots \\
\cdots \arrow{r} 
	& 0 \arrow{r} \arrow{u}
	& F_n(X,\ldots,X) \arrow{r} \arrow{u}
	& F_{n-1}(X,\ldots,X) \arrow{r} \arrow{u}
	& \cdots 
\end{tikzcd}
\]
The intermediate quotient $\gr_n= \Fil_{n}/\Fil_{n-1}$ takes the form
\[
\begin{tikzcd}
F_n(X,\ldots,X) \oplus F_n(X,\ldots,Z) \arrow{r} & F_n(X,\ldots,Z) \\
F_n(X,\ldots,X) \arrow{u} & 
\end{tikzcd}
\]
where one computes that the maps are the obvious inclusion and projection maps. In particular, the graded quotients of the total complex are exact and hence the total complex itself is exact.
\end{proof}

\section{A presentation of the Grothendieck group}\label{sec:presentation}

Let $\cA_0$ be a weak Serre subcategory of $\cA$. This implies that the category $\Ch_{\geq 0}^{\cA_0} \cA$  of chain complexes $X_\bullet$ in $\cA$ satisfying
\begin{enumerate}
\item $X_i=0$ for all $i<0$,
\item $\rH_i(X_\bullet)=0$ for all $i\gg 0$,
\item $\rH_i(X_\bullet)\in \cA_0$ for all $i$
\end{enumerate}
is an abelian subcategory of $\Ch \cA$. Assume that $\cA$ has enough projectives. Let $\cP \subset \cA$ be the full additive subcategory consisting of all projectives.
Let $\Ch^{\cA_0}_{\geq 0} \cP$ be the full subcategory consisting of those $P_\bullet \in \Ch_{\geq 0}^{\cA_0} \cA$ such that $P_i \in \cP$ for all $i$. Again, every $P_\bullet \in \Ch_{\geq 0}^{\cA_0} \cA$ has an Euler characteristic $\chi(P_\bullet) \in \rK_0(\cA_0)$.

Let  $\sim$ be the equivalence relation on (the set of isomorphism classes of)
$\Ch^{\cA_0}_{\geq 0} \cP$ generated by the relations:
\begin{enumerate}
\item $P_\bullet \sim Q_\bullet$ if $P_\bullet$ and $Q_\bullet$ are homotopy equivalent;
\item $P_\bullet \sim Q_\bullet$ if there exist short exact sequences
$0 \to X_\bullet \to Y_\bullet \to P_\bullet \to 0$ and
$0 \to X_\bullet \to Y_\bullet \to Q_\bullet \to 0$ in $\Ch^{\cA_0}_{\geq 0} \cP$.
\end{enumerate}
Note that, since the complexes consist of projective objects, quasi-isomorphic complexes are always homotopy equivalent.
Clearly if $P_\bullet \sim Q_\bullet$ then $\chi(P_\bullet)=\chi(Q_\bullet)$  in $\rK_0(\cA_0)$.

\begin{proposition}\label{prop:presentation}
The map $\chi \colon (\Ch^{\cA_0}_{\geq 0} \cP)/\!\sim\,\longto\, \rK_0(\cA_0)$ is a bijection.
\end{proposition}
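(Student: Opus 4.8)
The plan is to construct an explicit inverse $\psi\colon \rK_0(\cA_0)\to M$, where $M:=(\Ch^{\cA_0}_{\geq 0}\cP)/\!\sim$, and to prove that $\chi$ and $\psi$ are mutually inverse. The first thing I would record is that $\sim$ is compatible with $\oplus$: homotopy equivalences add, and direct-summing a short exact sequence of complexes with a fixed complex preserves relation (2). Hence $M$ is a commutative monoid under $\oplus$, and $\chi(P_\bullet\oplus Q_\bullet)=\chi(P_\bullet)+\chi(Q_\bullet)$, so $\chi$ is at least a monoid homomorphism.

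The crux is the following splitting lemma, which I would prove next: \emph{for a short exact sequence $0\to X_\bullet\to Y_\bullet\to Z_\bullet\to 0$ in $\Ch^{\cA_0}_{\geq 0}\cP$ one has $Y_\bullet\sim X_\bullet\oplus Z_\bullet$.} Since $Z_n$ is projective the sequence splits in each degree, so as a graded object $Y_\bullet=X_\bullet\oplus Z_\bullet$ with a differential $\left(\begin{smallmatrix} d_X & h\\ 0 & d_Z\end{smallmatrix}\right)$ determined by a map $h$ with $d_Xh+hd_Z=0$. To apply relation (2) I would set $Y'_\bullet:=X_\bullet\oplus X_\bullet\oplus Z_\bullet$ with differential $(a,b,c)\mapsto (d_Xa,\ d_Xb+hc,\ d_Zc)$; the relation on $h$ gives $D^2=0$. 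The two copies of $X_\bullet$ are subcomplexes, and dividing by the first (resp. the second) yields exactly the twisted $Y_\bullet$ (resp. the untwisted $X_\bullet\oplus Z_\bullet$). As both quotients come from the common pair $(Y'_\bullet, X_\bullet)$, relation (2) gives $Y_\bullet\sim X_\bullet\oplus Z_\bullet$. A short check with the homology long exact sequence, using that $\cA_0$ is closed under kernels, cokernels and extensions, confirms that every complex in sight lies in $\Ch^{\cA_0}_{\geq 0}\cP$. I expect this lemma to be the main obstacle: it is exactly where relation (2) must be used, and the explicit $Y'_\bullet$ is what makes it go through.

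With the splitting lemma in hand, $M$ becomes a \emph{group}: applying it to $0\to P_\bullet\to \cone(\id_{P_\bullet})\to P_\bullet[1]\to 0$ and using that $\cone(\id_{P_\bullet})$ is contractible (so $\sim 0$) gives $P_\bullet\oplus P_\bullet[1]\sim 0$, whence $[P_\bullet[1]]=-[P_\bullet]$. Thus $\chi\colon M\to\rK_0(\cA_0)$ is a group homomorphism. I would then define $\psi$ on objects by sending $A\in\cA_0$ to the class of any projective resolution $P^A_\bullet$ (which lies in $\Ch^{\cA_0}_{\geq 0}\cP$ since its only homology is $A$); this is well defined because two resolutions are homotopy equivalent, and additive on short exact sequences by the horseshoe lemma combined with the splitting lemma. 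By the universal property of $\rK_0(\cA_0)$ this extends to a homomorphism $\psi\colon\rK_0(\cA_0)\to M$, and $\chi\circ\psi=\id$ is immediate on generators.

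It remains to prove $\psi\circ\chi=\id$, i.e. $[P_\bullet]=\sum_i(-1)^i[P^{\rH_i(P_\bullet)}_\bullet]$ for every $P_\bullet$. I would induct on the top degree $n$ in which $P_\bullet$ has nonzero homology; the base case of an acyclic complex is handled since a bounded-below acyclic complex of projectives is contractible. For the inductive step, let $Q_\bullet\to\rH_n(P_\bullet)$ be a projective resolution. Using that $P_\bullet$ is acyclic in degrees $>n$ and that the $Q_i$ are projective, the usual step-by-step lifting produces a chain map $f\colon Q_\bullet[n]\to P_\bullet$ inducing the identity on $\rH_n$. Then $\cone(f)$ lies in $\Ch^{\cA_0}_{\geq 0}\cP$, has top homology degree $<n$, and satisfies $\rH_i(\cone f)=\rH_i(P_\bullet)$ for $i<n$. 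The cone sequence $0\to P_\bullet\to\cone(f)\to Q_\bullet[n][1]\to 0$ together with the splitting lemma gives $[\cone f]=[P_\bullet]-[Q_\bullet[n]]$ in $M$; feeding the inductive hypothesis for $\cone(f)$ into this relation yields the desired identity. This establishes $\psi\circ\chi=\id$, and hence that $\chi$ is a bijection.
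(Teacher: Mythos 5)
Your proof is correct, and while the overall architecture matches the paper's (make the quotient a group, define an inverse via projective resolutions, verify the two composites), both key technical steps are carried out differently. For additivity over short exact sequences, the paper compares two extensions of the shifted complexes built from a quasi-isomorphism $X_\bullet[-1]\to\cone(Y_\bullet\to Z_\bullet)$, obtaining $Y_\bullet[-1]\sim X_\bullet[-1]\oplus Z_\bullet[-1]$ and then dividing out the shift; your splitting lemma instead exploits the degreewise splitting (automatic here, as all terms are projective) and the explicit complex $X_\bullet\oplus X_\bullet\oplus Z_\bullet$ with its two embedded copies of $X_\bullet$, which yields $Y_\bullet\sim X_\bullet\oplus Z_\bullet$ directly and unshifted --- arguably the cleaner route, and it feeds naturally into both the group structure (via $\cone(\id)$) and the horseshoe-lemma step. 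For the surjectivity of $\psi$, the paper inducts on the amplitude of the homology using good truncations, which forces an extra step of replacing the truncation sequence by a quasi-isomorphic short exact sequence of complexes of projectives; you instead induct on the top degree of nonzero homology by coning off a lift $Q_\bullet[n]\to P_\bullet$ of a projective resolution of $\rH_n(P_\bullet)$, which stays entirely inside $\Ch^{\cA_0}_{\geq 0}\cP$ at the cost of the standard step-by-step lifting argument. Both inductions terminate for the same reason (bounded homology), and your sign bookkeeping $[P_\bullet[1]]=-[P_\bullet]$ comes out right, so the argument is complete.
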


\begin{proof}
We denote the mapping cone of a morphism $X_\bullet\to Y_\bullet$ by $\cone(X_\bullet\to Y_\bullet)$ and note that there is a short exact sequence
\[
	0 \longto Y_\bullet \overset{\alpha}{\longto} \cone(X_\bullet \to Y_\bullet) \overset{\beta}{\longto} X_\bullet[-1] \longto 0.
\]

Let $\cS=(\Ch^{\cA_0}_{\geq 0} \cP)/\!\sim$ be the set of equivalence classes, and let us denote the equivalence class of an $X_\bullet$  by $\{X_\bullet\} \in \cS$. The operation
\[
	\{X_\bullet\} + \{Y_\bullet\} := \{X_\bullet\oplus Y_\bullet\}
\]
is well-defined and makes $\cS$ into a monoid. 

We claim that $\cS$ is even a group. Indeed, comparing the short exact sequences
\[
	0 \longto X_\bullet 
	\overset{}{\longto} \cone(X_\bullet \overset{\id}{\to} X_\bullet) \oplus X_\bullet 
	\longto \cone(X_\bullet \overset{\id}{\to} X_\bullet) \longto 0
\]
and
\[
	0 \longto X_\bullet 
	\overset{(\alpha,0)}{\longto} \cone(X_\bullet \overset{\id}{\to} X_\bullet) \oplus X_\bullet
	\overset{\beta \oplus \id}{\longto} X_\bullet[-1] \oplus X_\bullet \longto 0
\]
we see that $X_\bullet \oplus X_\bullet[-1] \sim \cone(X_\bullet \overset{\id}{\to} X_\bullet)$, and since
$\cone(X_\bullet \overset{\id}{\to} X_\bullet)$ is homotopy-equivalent to $0$ we find 
 $\{X_\bullet\} + \{X_\bullet[-1]\}=0$ in $\cS$. 

Now if $0\to X_\bullet \to Y_\bullet \to Z_\bullet \to 0$ is a short exact sequence, then we claim that $\{Y_\bullet\}=\{X_\bullet\} + \{Z_\bullet\}$. 
Indeed, we have a quasi-isomorphism $\gamma\colon X_\bullet[-1] \to \cone(Y_\bullet \to Z_\bullet)$, and comparing
the exact sequences
\[
	0 \longto  X_\bullet[-1] \oplus Z_\bullet  
	\overset{\scalebox{0.6}{$\left(\begin{array}{cc} \id & \gamma \\ 0 & \alpha \end{array}\right)$}}{\longto}
		X_\bullet[-1]\oplus\cone(Y_\bullet \to Z_\bullet)  
	\longto Z_\bullet[-1] \oplus X_\bullet[-1] \longto 0
\]
and
\[
	0 \longto Z_\bullet \oplus X_\bullet[-1]  
	\overset{\scalebox{0.6}{$\left(\begin{array}{cc} \id & 0 \\ 0 & \alpha \end{array}\right)$}}{\longto}
		X_\bullet[-1]\oplus\cone(Y_\bullet \to Z_\bullet)  
	\longto Y_\bullet[-1] \longto 0	
\]
shows that $Y_\bullet[-1] \sim X_\bullet[-1] \oplus Z_\bullet[-1]$ and hence $\{Y_\bullet\} = \{X_\bullet\} + \{Z_\bullet\}$ in $\cS$. 

Taking projective resolutions of objects in $\cA_0$ defines an injective homomorphism $\psi\colon\rK_0(\cA_0) \to \cS$. To see that it is surjective, we use induction on the amplitude of a complex. We say that $X_\bullet$ has amplitude $\leq a$ if there is an $n$ so that $\rH_i(X_\bullet)=0$ for all $i<n$ and $i\geq n+a$. If $X_\bullet \in \Ch_{\geq 0}^{\cA_0} \cP$ has amplitude $\leq 1$ then up to shift $X_\bullet$ is a projective resolution of an object in $\cA_0$, and lies in the image of $\psi$. If $a\geq 2$ and $X_\bullet$ has amplitude $\leq a$, then for suitable $m$ the ``good truncation'' $\tau_{<m} X_\bullet$ (with homology in $\cA_0$, but necessarily consisting of projectives) gives a short exact sequence
\[
	0 \to \tau_{<m}X_\bullet \to X_\bullet \to \tau_{\geq m} X_\bullet \to 0
\]
in $\Ch_{\geq 0}^{\cA_0} \cA$ with $\tau_{<m}X_\bullet$ and $\tau_{\geq m}X_\bullet$ of amplitude $\leq a-1$. This sequence is quasi-isomorphic with a short exact sequence
\[
	0 \to U_\bullet \to X'_\bullet \to V_\bullet \to 0
\]
consisting of complexes of projectives. Since $X_\bullet$ and $X'_\bullet$ consist of projectives they are homotopy-equivalent, and since $U_\bullet $ and $V_\bullet$ have amplitude $\leq a-1$ we conclude that $\{X_\bullet\} = \{U_\bullet\} + \{V_\bullet\}$ lies in the image of $\psi$.
\end{proof}

\section{Derived functors of non-additive functors} \label{sec:DoldPuppe}

For an abelian category $\cA$ we denote by $\Simp \cA$ the category of simplicial objects in $\cA$ and by $\HoSimp \cA$ its homotopy category (whose objects are the objects of $\Simp \cA$, and whose morphisms are the homotopy classes of morphisms). Any functor $F\colon \cA \to \cB$ induces a functor $\Simp \cA \to \Simp \cB$ which is compatible with simplicial homotopy, and hence induces a functor $\HoSimp \cA \to \HoSimp \cB$.

A simplicial object $X_\bullet \in \Simp \cA$ gives a chain complex
\[
	 \cdots \to X_2 \to X_1 \to X_0 \to 0 
\]
in the usual way, and this induces a functor $C\colon \HoSimp \cA \to \cK_{\geq 0} \cA$. We will use the following variant of the Dold-Kan theorem.

\begin{theorem}\label{thm:Dold-Kan}
The functor  $C\colon \HoSimp \cA \to \cK_{\geq 0} \cA$ is an equivalence of categories. 
\end{theorem}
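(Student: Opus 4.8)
The plan is to deduce this from the classical Dold--Kan correspondence, which gives mutually inverse equivalences of the \emph{ordinary} categories $N\colon \Simp\cA \to \Ch_{\geq 0}\cA$ (the normalized chain complex) and $K\colon \Ch_{\geq 0}\cA \to \Simp\cA$, and then to bridge the gap between $C$ and $N$ and between the two notions of homotopy. The first step is to recall that the Moore complex splits naturally as $C X_\bullet = N X_\bullet \oplus D X_\bullet$, where $D X_\bullet$ is the subcomplex generated by degenerate simplices, and that $D X_\bullet$ is naturally chain-contractible (the normalization theorem). Hence the inclusion $N X_\bullet \injto C X_\bullet$ is a natural chain homotopy equivalence, so the two functors $C, N\colon \HoSimp\cA \to \cK_{\geq 0}\cA$ are naturally isomorphic, and it suffices to prove the theorem with $N$ in place of $C$.

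For essential surjectivity, given $V_\bullet \in \cK_{\geq 0}\cA$ the object $K V_\bullet \in \Simp\cA$ satisfies $N K V_\bullet \cong V_\bullet$, so every isomorphism class in $\cK_{\geq 0}\cA$ is hit. For full faithfulness, the classical correspondence already supplies a bijection on ordinary morphism sets, $\Hom_{\Simp\cA}(X_\bullet, Y_\bullet) \cong \Hom_{\Ch_{\geq 0}\cA}(N X_\bullet, N Y_\bullet)$. To descend this to a bijection on homotopy classes it is enough to establish the dictionary: two parallel simplicial maps $f,g\colon X_\bullet \to Y_\bullet$ are simplicially homotopic if and only if $N f$ and $N g$ are chain homotopic. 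The forward implication is precisely what is needed for $N$ (equivalently $C$) to be defined on homotopy categories in the first place, and is the standard fact that the simplicial homotopy operators produce an explicit chain homotopy on the normalized complex.

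I expect the reverse implication to be the main obstacle: a chain homotopy between $N f$ and $N g$ must be promoted to a simplicial homotopy between $f$ and $g$. The cleanest route is to describe both notions uniformly by tensoring with an interval. In $\Ch_{\geq 0}\cA$ a chain homotopy is a map out of the cylinder $N X_\bullet \otimes I$, where $I = N(\bZ[\Delta^1])$ is the normalized chain complex of the $1$-simplex; in $\Simp\cA$ a simplicial homotopy is a map out of $X_\bullet \otimes \Delta^1$. The Eilenberg--Zilber theorem supplies a natural chain homotopy equivalence $N(X_\bullet \otimes \Delta^1) \simeq N X_\bullet \otimes I$ compatible with the two endpoint inclusions, so a chain homotopy and a simplicial homotopy carry the same data. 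Transporting a chain homotopy through $K$ and along this equivalence then yields a simplicial homotopy between $K N f$ and $K N g$, which under the natural isomorphism $K N \cong \id$ is the desired homotopy between $f$ and $g$. Alternatively one can bypass Eilenberg--Zilber and build the simplicial homotopy operators $h_i\colon X_n \to Y_{n+1}$ directly from the chain homotopy via the explicit Dold--Kan formulas; this is more computational but self-contained. In either case, combining essential surjectivity with the resulting bijection on homotopy classes shows that $N$, and therefore $C$, is an equivalence.
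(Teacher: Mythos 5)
Your proof is correct and takes essentially the same route as the paper: reduce to the classical Dold--Kan equivalence for the normalized complex $N$ via the natural splitting $C(X_\bullet) = N(X_\bullet) \oplus D(X_\bullet)$ with $D$ contractible, and then match simplicial homotopy with chain homotopy. The paper simply cites Weibel [8.4.1] and Goerss--Jardine [III.2.4] for these facts, whereas you additionally spell out the Eilenberg--Zilber/interval argument for the homotopy correspondence.
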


\begin{proof}This is the Dold-Kan theorem \cite[8.4.1]{Weibel94}, except that we use the full associated chain complex $C(X_\bullet)$ instead of the normalized complex $N(X_\bullet)$. It is not hard to show that the inclusion $N(X_\bullet) \subset C(X_\bullet)$ is a homotopy equivalence. See also \cite[III.2.4]{GoerssJardine}.
\end{proof}

Let $\cP \subset \cA$ be the additive subcategory of projectives, and assume $\cA$ has enough projectives. By the Dold-Kan theorem for every $X_\bullet \in \Ch_{\geq 0} \cA$ 
 there exists a $P_\bullet \in \Simp \cP$ with a quasi-isomorphism $\alpha\colon s(P_\bullet) \to X_\bullet$, and $(P_\bullet, \alpha)$ is unique up to a unique simplicial homotopy equivalence. This construction defines a `simplicial projective resolution' functor
\[
	\rho\colon \cK_{\geq 0} \cA \to \HoSimp \cP,
\]
which is the essential ingredient in the definition of derived functors of non-additive functors.

\begin{definition}
Let $F\colon \cA\to \cB$ be a functor. Then the composition
\[
	\cK_{\geq 0} \cA \overset{\rho}{\longto} \HoSimp \cP \overset{F}{\longto} \HoSimp \cB
	\overset{s}{\longto} \cK_{\geq 0} \cB
\]
is called the \emph{total derived functor} of $F$, and denoted $\rL F$.
\end{definition}

Finally, let $\Simp^{\cA_0} \cP$ be the full subcategory of $\Simp \cP$ consisting of those simplicial objects $P_\bullet$ with bounded homology contained in $\cA_0$. Let $\sim$ be the equivalence relation on $\Simp^{\cA_0} \cP$ generated by
\begin{enumerate}
\item $P_\bullet \sim Q_\bullet$ if $P_\bullet$ and $Q_\bullet$ are homotopy equivalent,
\item $P_\bullet \sim Q_\bullet$ if there exist short exact sequences $0 \to X_\bullet \to Y_\bullet \to P_\bullet \to 0$ and $0 \to X_\bullet \to Y_\bullet \to Q_\bullet \to 0$ in $\Simp^{\cA_0} \cP$.
\end{enumerate}

\begin{corollary}\label{cor:simplicial-presentation}
The map $\chi\colon (\Simp^{\cA_0} \cP)/\!\sim\, \longto \rK_0(\cA_0)$ is a bijection.
\end{corollary}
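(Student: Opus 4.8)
The plan is to deduce Corollary~\ref{cor:simplicial-presentation} from Proposition~\ref{prop:presentation} by transporting the latter's bijection across the Dold--Kan equivalence. The functor $s\colon \Simp\cA \to \Ch_{\geq 0}\cA$ sends a simplicial object to its associated (unnormalized) chain complex, and by Theorem~\ref{thm:Dold-Kan} it induces an equivalence $C\colon \HoSimp\cA \to \cK_{\geq 0}\cA$. First I would check that $s$ restricts to a functor $\Simp^{\cA_0}\cP \to \Ch^{\cA_0}_{\geq 0}\cP$: since $s(P_\bullet)$ has the same homology as $P_\bullet$ and $s$ carries simplicial objects of projectives to complexes of projectives (each term $s(P_\bullet)_n = P_n$ is projective), the finiteness and $\cA_0$-membership conditions on homology are literally the same on both sides, as is the Euler characteristic $\chi$.

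The heart of the argument is to verify that $s$ is compatible with the two equivalence relations, so that it descends to a bijection on $\sim$-classes. For relation~(1) this is immediate: simplicial homotopy equivalences map to chain homotopy equivalences under $s$. For relation~(2) I would use that $s$ is an \emph{exact} functor on the underlying abelian categories of simplicial objects and chain complexes --- it is computed degreewise and direct sums/kernels/cokernels of simplicial objects are formed degreewise --- so a short exact sequence $0\to X_\bullet \to Y_\bullet \to P_\bullet \to 0$ in $\Simp^{\cA_0}\cP$ maps to a short exact sequence $0\to s(X_\bullet)\to s(Y_\bullet)\to s(P_\bullet)\to 0$ in $\Ch^{\cA_0}_{\geq 0}\cP$, and conversely. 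Thus $P_\bullet \sim Q_\bullet$ in $\Simp^{\cA_0}\cP$ if and only if $s(P_\bullet)\sim s(Q_\bullet)$ in $\Ch^{\cA_0}_{\geq 0}\cP$, and $s$ induces a well-defined map $(\Simp^{\cA_0}\cP)/\!\sim\,\to\,(\Ch^{\cA_0}_{\geq 0}\cP)/\!\sim$.

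Next I would argue this induced map is a bijection. Essential surjectivity of $C$ up to homotopy, combined with the fact that both sides are quotients by homotopy equivalence, shows the map is surjective on $\sim$-classes; for injectivity I would use that $C$ is fully faithful, so an isomorphism $s(P_\bullet)\cong s(Q_\bullet)$ in $\cK_{\geq 0}\cA$ (i.e.\ a homotopy equivalence) lifts to a simplicial homotopy equivalence $P_\bullet \sim Q_\bullet$. Since the diagram
\begin{equation*}
\begin{tikzcd}
(\Simp^{\cA_0}\cP)/\!\sim \arrow{r}{s} \arrow{dr}[swap]{\chi} & (\Ch^{\cA_0}_{\geq 0}\cP)/\!\sim \arrow{d}{\chi} \\
& \rK_0(\cA_0)
\end{tikzcd}
\end{equation*}
commutes (the two $\chi$'s agree because $s$ preserves homology), and the right-hand $\chi$ is a bijection by Proposition~\ref{prop:presentation}, the left-hand $\chi$ is a bijection as well.

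The step I expect to require the most care is the compatibility with relation~(2): one must be sure that the notion of short exact sequence used in the definition of $\sim$ on $\Simp^{\cA_0}\cP$ is the degreewise one, so that exactness of $s$ applies cleanly, and that the conditions defining $\Simp^{\cA_0}\cP$ (bounded homology in $\cA_0$) really do correspond term-by-term to those defining $\Ch^{\cA_0}_{\geq 0}\cP$ under $s$. Everything else is formal transport of structure across the Dold--Kan equivalence of Theorem~\ref{thm:Dold-Kan}.
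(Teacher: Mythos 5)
Your proposal is correct and is essentially the paper's own proof, which consists of the single line ``This follows from Proposition \ref{prop:presentation} and Theorem \ref{thm:Dold-Kan}''; you have simply filled in the transport-of-structure details that the paper leaves implicit. The one point requiring care --- which you rightly single out --- is the ``conversely'' for short exact sequences: since the unnormalized functor $s$ is an equivalence only up to homotopy, one should transport exact sequences from $\Ch^{\cA_0}_{\geq 0}\cP$ back to $\Simp^{\cA_0}\cP$ via the normalized complex $N$ and its strict inverse (which form an exact equivalence of abelian categories), and then use that the inclusion $N(P_\bullet)\subset s(P_\bullet)$ is a homotopy equivalence so that the resulting $\sim$-classes agree.
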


\begin{proof}This follows from Proposition \ref{prop:presentation} and Theorem \ref{thm:Dold-Kan}.
\end{proof}

\section{Proof of the main result}

Let $\cA_0$ be a weak Serre subcategory of $\cA$.  Assume that every $X\in \cA_0$ has a projective resolution in $\cA$. Let $\cP \subset \cA$ be the exact category of all projectives. Let $\Simp^{\cB_0} \cB$ be the full subcategory of $\Simp \cB$ consisting of those simplicial objects with bounded homology contained in $\cB_0$.

\begin{theorem}
Let $d$ be a positive integer.
Let $F\colon \cA \to \cB$ be a functor with $F(0)=0$. Assume $F$ is of degree $\leq d$
and  that it maps $\Simp^{\cA_0} \cP$ to $\Simp^{\cB_0} \cB$. Then there exists a unique map $f\colon \rK_0(\cA_0) \to \rK_0(\cB_0)$ such that the square
\[
\begin{tikzcd}
\Simp^{\cA_0} \cP \arrow{r}{F} \arrow{d}{\chi} & \Simp^{\cB_0} \cB \arrow{d}{\chi} \\
\rK_0(\cA_0) \arrow{r}{f} & \rK_0(\cB_0)
\end{tikzcd}
\]
commutes. Moreover, the map $f$ is of degree $\leq d$.
\end{theorem}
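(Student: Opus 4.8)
The plan is to reduce everything to the presentation of $\rK_0(\cA_0)$ furnished by Corollary \ref{cor:simplicial-presentation}, which says that $\chi\colon (\Simp^{\cA_0}\cP)/\!\sim\,\to\rK_0(\cA_0)$ is a bijection. Because $\chi$ is surjective, commutativity of the square forces $f(\chi(P_\bullet))=\chi(F(P_\bullet))$, so $f$ is unique if it exists; note $\chi(F(P_\bullet))$ makes sense precisely because $F$ sends $\Simp^{\cA_0}\cP$ into $\Simp^{\cB_0}\cB$. To build $f$ it then suffices to show that $P_\bullet\mapsto\chi(F(P_\bullet))$ is constant on $\sim$-classes, i.e.\ invariant under the two generating relations. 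The homotopy relation is immediate: $F$ carries simplicial homotopies to simplicial homotopies (Section \ref{sec:DoldPuppe}), so homotopy-equivalent $P_\bullet,Q_\bullet$ yield homotopy-equivalent $F(P_\bullet),F(Q_\bullet)$, with equal homology and hence equal Euler characteristic.

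The heart of the argument is invariance under the extension relation. Given $0\to X_\bullet\to Y_\bullet\to P_\bullet\to 0$ and $0\to X_\bullet\to Y_\bullet\to Q_\bullet\to 0$ in $\Simp^{\cA_0}\cP$, I would apply K\"ock's resolution (Theorem \ref{thm:resolution}) in each simplicial degree: since $P_\bullet$ is degreewise projective each $0\to X_m\to Y_m\to P_m\to 0$ splits, producing functorially in $m$ a double complex $C(m)$ with entries $F_n(X_m,\ldots,X_m,Y_m)$ and $F_n(X_m,\ldots,X_m)$ and a quasi-isomorphism $\Tot C(m)\to F(P_m)$; this is bounded because $F_{n}=0$ for $n>d$. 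Functoriality assembles the $C(m)$ into a triple complex $T$, which I would analyze through its two spectral sequences: totalizing the K\"ock directions first recovers $F(P_\bullet)$ up to quasi-isomorphism, so $\chi(T)=\chi(F(P_\bullet))$, while totalizing the simplicial direction first presents $T$ as a bounded K\"ock double complex whose entries are the chain complexes of the cross-effect simplicial objects. Each such object is a direct summand of $F$ applied to a finite sum of copies of $X_\bullet$ and $Y_\bullet$, hence lies in $\Simp^{\cB_0}\cB$ (as $\cB_0$ is closed under summands), and additivity of Euler characteristics over the bounded double complex yields
\[
\chi\big(F(P_\bullet)\big)=\sum_{n=1}^{d}(-1)^{n-1}\Big(\chi\big(F_n(X_\bullet,\ldots,X_\bullet,Y_\bullet)\big)-\chi\big(F_n(X_\bullet,\ldots,X_\bullet)\big)\Big).
\]
The decisive observation is that, by functoriality of $F_n$ (applied to the face and degeneracy maps of $X_\bullet$ and $Y_\bullet$ alone), each cross-effect simplicial object depends only on $X_\bullet$ and $Y_\bullet$ and not on the chosen surjection; so the right-hand side is unchanged upon replacing $P_\bullet$ by $Q_\bullet$, giving $\chi(F(P_\bullet))=\chi(F(Q_\bullet))$. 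I expect the main obstacle to lie exactly in this bookkeeping: verifying that $T$ is bounded with homology in $\cB_0$ so that all Euler characteristics are defined, and that the two spectral sequences collapse to the additivity statement above.

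It remains to show $f$ has degree $\leq d$. Writing $f_n$ for the $n$-th cross effect of the function $f$, defined as $f_n(x_1,\ldots,x_n)=\sum_{S\subseteq\{1,\ldots,n\}}(-1)^{n-|S|}f(\sum_{i\in S}x_i)$ so that $f$ has degree $\leq d$ exactly when $f_{d+1}\equiv 0$, I claim
\[
f_n\big(\chi(P^1_\bullet),\ldots,\chi(P^n_\bullet)\big)=\chi\big(F_n(P^1_\bullet,\ldots,P^n_\bullet)\big).
\]
This follows by substituting $f(\chi(-))=\chi(F(-))$ and $\sum_{i\in S}\chi(P^i_\bullet)=\chi(\bigoplus_{i\in S}P^i_\bullet)$, decomposing $\chi(F(\bigoplus_{i\in S}P^i_\bullet))$ into cross effects via Propositions \ref{prop:cross-decomposition} and \ref{prop:cross-associative}, and performing the binomial cancellation that annihilates every term except $F_n$. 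Since $\chi$ is surjective, every tuple in $\rK_0(\cA_0)$ is realized by simplicial objects, and because $F$ has degree $\leq d$ we have $F_{d+1}=0$; hence $f_{d+1}\equiv 0$ and $f$ is of degree $\leq d$, as desired.
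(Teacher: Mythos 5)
Your proposal is correct and follows essentially the same route as the paper: uniqueness and the reduction to the generating relations via Corollary \ref{cor:simplicial-presentation}, homotopy invariance of $F$, and then K\"ock's resolution applied degreewise to produce a finite resolution of $F(P_\bullet)$ by cross-effect terms lying in $\Simp^{\cB_0}\cB$ and depending only on $X_\bullet$ and $Y_\bullet$, yielding the same Euler-characteristic formula (your signs agree with the paper's after factoring out $(-1)$). Your final step, identifying the cross effects of $f$ with the Euler characteristics of the cross effects $F_n$, is just a more explicit version of the paper's one-line observation that $F_{d+1}=0$ forces $f_{d+1}=0$.
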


This theorem implies Theorem \ref{thm:one} of the introduction.

\begin{proof}
By Corollary \ref{cor:simplicial-presentation}, the map $f$ is unique, and to establish existence it suffices to show that for every
$P_\bullet$, $Q_\bullet$ in $\Simp^{\cA_0} \cP$ we have
that  $P_\bullet \sim Q_\bullet$ implies $\chi(F(P_\bullet))=\chi(F(Q_\bullet))$
in $\rK_0(\cB_0)$. Recall  that the equivalence relation $\sim$ is generated by homotopy equivalences and by relations coming from short exact sequences.

If $P_\bullet$ and $Q_\bullet$ are simplicially homotopy-equivalent, then $F(P_\bullet)$ and $F(Q_\bullet)$ are simplicially homotopy-equivalent, and hence $\chi(F(P_\bullet))=\chi(F(Q_\bullet))$ in $\rK_0(\cB_0)$. 

Now let
\[
	0 \to X_\bullet \to Y_\bullet \to P_\bullet \to 0
\]
be a short exact sequence in $\Simp^{\cA_0} \cP$. Note that for every $n$ the sequence $0\to X_n \to Y_n \to P_n\to 0$ is split  (since $P_n$ is projective), but that the sequence in $\Simp^{\cA_0} \cP$ need not split. For every $n$ the construction of \S \ref{sec:Koeck-resolution} gives a double complex
\[
\begin{tikzcd}
\cdots \arrow{r} 
	& F_3(X_n,X_n,Y_n) \arrow{r} 
	& F_2(X_n,Y_n) \arrow{r} 
	& F(Y_n)   \\
\cdots \arrow{r} 
	& F_3(X_n,X_n,X_n) \arrow{r} \arrow{u}
	& F_2(X_n,X_n) \arrow{r} \arrow{u}
	& F(X_n)  \arrow{u} 
\end{tikzcd}
\]
whose total complex is a resolution of $F(P_n)$.
 Since the construction of the double complex is functorial in the map $X\to Y$, we  obtain a double complex 
\[
\begin{tikzcd}
\cdots \arrow{r} 
	& F_3(X_\bullet,X_\bullet,Y_\bullet) \arrow{r} 
	& F_2(X_\bullet,Y_\bullet) \arrow{r} 
	& F(Y_\bullet)   \\
\cdots \arrow{r} 
	& F_3(X_\bullet,X_\bullet,X_\bullet) \arrow{r} \arrow{u}
	& F_2(X_\bullet,X_\bullet) \arrow{r} \arrow{u}
	& F(X_\bullet)  \arrow{u} 
\end{tikzcd}
\]
in $\Simp \cB$, whose associated total complex is a resolution of $F(P_\bullet)$. Because $F$ is of finite degree, this is a \emph{finite} resolution.

Each of the terms is a direct summand of a simplicial object of the form
\[
	F(X_\bullet\oplus \cdots \oplus X_\bullet)\quad\text{or}\quad
	F(X_\bullet\oplus \cdots \oplus X_\bullet \oplus Y_\bullet),
\]
and hence  lies in $\Simp^{\cB_0} \cB$. This means that in $\rK_0(\cB_0)$ we have  
\[
	\chi(F(P_\bullet)) = \sum_{n=1}^d (-1)^n \Big(
		\chi\big(F_n(X_\bullet,\ldots, X_\bullet)\big) -
		\chi\big(F_n(X_\bullet,\ldots, Y_\bullet)\big) \Big).
\]
In particular, since the terms do not depend on the map $X_\bullet \to Y_\bullet$, we see that if 
\[
	0 \to X_\bullet \to Y_\bullet \to Q_\bullet \to 0
\]
is a second short exact sequence in $\Simp^{\cA_0} \cP$ then 
$\chi(F(P_\bullet)) = \chi(F(Q_\bullet))$ in $\rK_0(\cB_0)$. This proves the existence of $f$.

Finally, note that the $(d+1)$-st cross effect of
the functor
\[
	F\colon \Simp^{\cA_0} \cP \to \Simp^{\cB_0} \cB
\]
vanishes, which implies the analogous statement for the function $f$, and shows that
$f$ is of degree $\leq d$.
\end{proof}

\bigskip

\end{document}